\documentclass[11pt]{amsart}
\usepackage{amsmath, palatino, mathpazo, amsfonts, amssymb, mathtools, tikz-cd}
\usepackage[mathscr]{eucal}
\usepackage[all]{xy}
\usepackage{datetime}



\usepackage{amsthm}
\usepackage {tikz}
\usepackage {tgbonum}                 
\usepackage {amsmath}                   
\usepackage {amssymb}                   
\usepackage {bibnames}                  
\usepackage {cite}                      
\usepackage {url}                       
\usepackage {varioref}                  

\usepackage{tabularx}
\usepackage{array}

\hyphenation {acro-nym acro-nyms}       

\newtheorem{theorem}{Theorem}[section]
\newtheorem{lemma}[theorem]{Lemma}
\newtheorem{proposition}[theorem]{Proposition}
\newtheorem{corollary}[theorem]{Corollary}
\newtheorem{conjecture}[theorem]{Conjecture}

\theoremstyle{remark}
\newtheorem{remark}[theorem]{Remark}
\newtheorem{example}[theorem]{Example}

\numberwithin{equation}{section}

\newcommand{\op}{\operatorname}

\newcommand{\M}{\overline{\mathcal{M}}}

\newcommand{\vir}{{\rm vir}}
\newcommand{\ev}{{\rm ev}}

\newcommand{\td}{{\rm td}}
\newcommand{\ch}{{\rm ch}}

\def\O{\mathcal{O}}

\newcommand{\GW}{{\mathrm{GW}}}
\newcommand{\GV}{{\mathrm{GV}}}

\newcommand{\e}{\epsilon}
\newcommand{\tw}{\mathrm{tw}}

	
\title[QK $=$ GV]{Gopakumar--Vafa invariants $=$ quantum $K$-invariants \\ on Calabi--Yau threefolds}
	
	\author{Y.-C.~Chou}
	\email{bensonchou@gate.sinica.edu.tw, chou@math.utah.edu}
	
	\author{Y.-P.~Lee}
	\email{yplee@math.utah.edu, ypleemath@gate.sinica.edu.tw}
	
\address{Institute of Mathematics, Academia Sinica, Taipei 10617, Taiwan, and
Department of Mathematics, University of Utah, 	Salt Lake City, Utah 84112-0090, U.S.A.}

\date{\today}

\newenvironment{dedication}
  {
   \itshape             
   \raggedleft          
  }
  {\par 
    \vspace{30pt}
  }

\begin{document}
\begin{dedication}
In memory of Bumsig Kim
\end{dedication}

\maketitle

\begin{abstract}
The main purpose of this article is to discuss a project relating Gopakumar--Vafa invariants to quantum $K$-invariants on Calabi--Yau threefolds. Results in genus zero, including recent and forthcoming works, are reported. 
\end{abstract}



\section{Introduction}
This is an expository article based on Part I \cite{Chou_Lee_2022} and Part II \cite{Chou_Lee_2023} of our ongoing project about quantum $K$-theory of Calabi--Yau threefolds (CY3). In this project, we hope to explore various special properties of \emph{quantum $K$-theory on Calabi--Yau threefolds}, which have not yet been extensively studied in the mathematical literature. We chose to start with its relation with the Gopakumar--Vafa invariants partly because we feel that the \emph{integrality} of quantum $K$-theory has not received much attention so far.

Gromov--Witten theory of Calabi--Yau threefolds enjoys various beautiful properties, among them the \emph{Kodaira--Spencer theory of gravity} and \emph{holomorphic anomaly equation} of Bershadsky, Cecotti, Ooguri and Vafa, the appearance of (quasi-)modular forms etc.. Some of these remarkable phenomena should have counterparts in $K$-theory. We hope to explore these properties in the context of quantum $K$-theory in the future.


\subsection{Gopakumar--Vafa invariants and Quantum $K$-theory}
Among the \emph{integral} (virtual) enumerative invariants on Calabi--Yau threefolds (CY3), two will be the foci of this paper. The first, called the \emph{Gopakumar--Vafa invariants} (GV) \cite{Gopakumar_Vafa_1, Gopakumar_Vafa_2}, was introduced 
in theoretical physics as ``new topological invariants on \emph{Calabi--Yau threefolds}'', counting the
``numbers of BPS states''. There have been various attempts at giving these BPS invariants rigorous mathematical definitions. At the moment, there are still unresolved issues of these definitions. We refer the readers to \cite{Maulik_Toda_2018} and references therein for mathematical definitions of various degrees of generalities of these invariants. Gopakumar and Vafa also argue that these BPS invariants and the Gromov--Witten invariants are intimately related by the following formula 
\begin{equation} \label{e:1.1}
\begin{split}
 &\sum _{g=0}^{\infty }~\sum _{\beta \in H_{2}(M,\mathbb {Z} )}{\GW}_{g,\beta} q^{\beta }\lambda ^{2g-2} \\
 = & \sum _{g=0}^{\infty }~\sum _{k=1}^{\infty }~\sum _{\beta \in H_{2}(M,\mathbb {Z} )}{\GV}_{g,\beta}{\frac {1}{k}}\left(2\sin \left({\frac {k\lambda }{2}}\right)\right)^{2g-2}q^{k\beta } ,
\end{split}
\end{equation}
which all viable mathematical definitions must satisfy. In fact, this formula is ultimately the most important test for any geometric definition. The detailed definitions and discussions can be found in Section~\ref{s:2.2}. In this paper, we will use the above (invertible) relation as the \emph{definition} of the Gopakumar--Vafa invariants (in terms of Gromov--Witten invariants). As defined, it is not at all obvious these invariants are integral. The integrality of this ad hoc definition was proven by E.~Ionel and T.~Parker using symplectic techniques in a remarkable work \cite{Ionel_Parker_2018} .

The second is the \emph{quantum $K$-invaraints} (QK) \cite{Givental_2000, Lee_2004}, a $K$-theoretic variant of the (cohomological) Gromov--Witten invariants (GW). The definition and references for quantum $K$-theory will be recalled in Section~\ref{section_KGW_J}. Whereas Gromov--Witten invariants are \emph{rational} numbers, quantum $K$-invariants, counting alternating sums of ranks of sheaf cohomology (Euler characteristic), produces \emph{integral} invariants by definition. Therefore, any relations established between the quantum $K$-invariants and Gopakumar--Vafa invariants will automatically produce the \emph{integrality for the GV}.

\subsection{GV $=$ QK on CY3?}
The question is: Are there any relations between the Gopakumar--Vafa invariants and quantum $K$-invariants on Calabi--Yau threefolds? 
We think this is the case. In fact, we think that one should be able to \emph{define} the Gopakumar--Vafa invariants (GV) in terms of the quantum $K$-invariants, similar to \eqref{e:1.1}.  While this definition would still be \emph{ad hoc}, it would have the benefit of being integral by definition. It would also give an alternative proof of Ionel--Parker's integrality theorem within the algebraic category.

Why do we believe such relationship exist? On the conceptual level, there is a clear link between GV and QK through GW. On the one hand, as pointed out above, Gopakumar--Vafa invariants and Gromov--Witten invariants are reconstructible from each other by the Gopakumar--Vafa formula \eqref{e:1.1}. On the other hand, Givental and his collaborators has furnished a clear link between QK and GW via a virtual orbifold Hirzebruch--Riemann--Roch theorem \cite{Givental_Tonita_2011, Givental_PEK}.

The close relationship between the quantum $K$-theory and quantum cohomology was understood since the early phase of quantum $K$-theory. In fact, many $K$-theoretic results were inspired by their cohomological counterparts. Many formulas and results were first guessed based on their cohomological counterparts. See, e.g., \cite{Givental_Lee_2003, Lee_2004}. In \cite{Givental_Tonita_2011} and subsequent works \cite{Givental_PEK}, A.~Givental and his collaborators completely characterized genus zero quantum $K$-theory in terms of quantum cohomology. Subsequently, Givental generalizes these results to permutation equivariant setting and to higher genera \cite{Givental_PEK}.

Therefore, there is a relation between quantum $K$-invariants and Gopakumar--Vafa invariants for Calabi--Yau threefolds via Gromov--Witten invariants. In this series of papers, we seek to explore these links and to ``codify'' the relations between QK and GV. Our starting point is the conjecture by H.~Jockers, P.~Mayr \cite{Jockers_Mayr_2019} and S.~Garoufalidis, E.~Scheidegger \cite{Garoufalidis_Scheidegger_2022} in genus zero. We will formulate a precise version of the conjecture for all Calabi--Yau threefolds in genus zero based on their works. 

The Gopakumar--Vafa formula \eqref{e:1.1}, together with results in Section~\ref{s:3.1}, can be interpreted as the following statement: \emph{in genus zero}, the collection of all Gopakumar--Vafa invariants of a fixed Calabi--Yau threefold contains exactly the same information as the collection of all Gromov--Witten invariants. 
We hope that in the future to study the higher genus counterpart. We will offer some heuristic arguments why this might be possible if higher genus computations can be done for $(-1,-1)$ curves, the so-called multiple cover formula.

\subsection*{Acknowledgements}
We wish to thank A.~Givental, R.~Pandharipande, H.~Tseng and Y.~Wen for discussions about this work.
The research is partially supported by the Simons Foundation, the NSTC,  University of Utah and Academia Sinica.

\section{GW, GV and QK} \label{s:2}

\subsection{Gromov--Witten invariants}
Let $X$ be a smooth complex projective variety, and $\M_{g,n}(X,\beta)$ be the M.~Kontsevich’s moduli space of $n$-pointed, genus $g$, degree $\beta$ stable maps. 
Given $i\in \{ 1,\dots,n\}$, there is an evaluation map 
\[
\begin{split}
\ev_i : \M_{g,n}(X,\beta) & \rightarrow X
\\
[f: (C;x_1,\dots,x_n)] & \mapsto f(x_i),
\end{split}
\]
and a line bundle $L_i := x_i^* w_{\mathcal{C}/\M}$ on $\M_{g,n}(X,\beta)$, where $w_{\mathcal{C}/\M}$ is the relative dualizing sheaf of the universal curve $\mathcal{C} \rightarrow \M_{g,n}(X,\beta)$ and $x_i: \M_{g,n}(X,\beta) \rightarrow \mathcal{C}$ is the $i$-th mark point. 

(Cohomological) Gromov-Witten invariants of $X$ are defined to be
\[
\langle \tau_{k_1}(\phi_1)\dots \tau_{k_n}(\phi_n) \rangle^{X, {H}}_{g,n,\beta} :=  \pi^H_* \left( \cup_{i=1}^n \ev_i^*(\phi_i) c_1(L_i)^{k_i} \cap [\M_{g,n}(X,\beta)]^{\vir} \right) \in \mathbb{Q},
\]
where 
\[
 \pi: \M_{g,n}(X,\beta) \to pt := \op{Spec}(\mathbb{C})
\]
is the structural map and $\pi^H_*$ is the (cohomological) pushforward to the point.
Here $\phi_1,\dots,\phi_n \in H(X)$, $k_1,\dots, k_n \in \mathbb{Z}_{\geq 0}$, and $[\M_{g,n}(X,\beta)]^{\vir}$ are the (cohomological) virtual fundamental classes. 

The genus $g$ Gromov--Witten invariants of $X$ can be encoded in a generating function, called genus-$g$ descendant potential
\[
F^{{H}}_g(t) = \sum_{n\geq 0}\sum_{\beta} \frac{Q^{\beta}}{n!} \langle  t(L),\dots,t(L)  \rangle^{X,{H}}_{g,n,\beta}.
\]
Here the sum is over all curve class $\beta \in H_2(X)_{\geq 0}$ and $Q^{\beta}$ are formal variables, called the Novikov variables, which keep track of the curve classes. $t(q)$ stands for any polynomial of one variable with coefficients in $H(X)$.
That is 
\[
t(q) = \sum_{k\in \mathbb{Z}_{\ge 0}} \sum_{\alpha=1}^N t_{k}^{\alpha} \phi_{\alpha} q^k
\] 
with $\{\phi_{\alpha} \}_{\alpha=1}^N$ a basis of $H(X)$.

\subsection{Gopakumar--Vafa invariants} \label{s:2.2}
In theoretical physics, R.~Gopakumar and C.~Vafa in \cite{Gopakumar_Vafa_1, Gopakumar_Vafa_2} introduced new topological invariants on \emph{Calabi--Yau threefolds} (CY3) $X$, which are now commonly called \emph{Gopakumar--Vafa invariants}.
These invariants represent the counts of ``numbers of BPS states'' on $X$. Unlike the Gromov--Witten invariants, which are defined for any symplectic manifolds or orbifolds, Gopakumar--Vafa invariants only make sense for Calabi--Yau threefolds (or their variants).

The virtual dimensions for moduli spaces (of stable maps) to CY3 are always equal to the number of marked points. That is,
\[
 \op{vdim} \M_{g,n} (X, \beta) = n.
\]
By general properties of the moduli spaces, more precisely the string equation, the divisor equation and the dilaton equation, all Gromov--Witten invariants can be easily reconstructed from $0$-pointed invariants
\[
 {\GW}_{g, \beta} := \pi^H_* \left( [\M_{g,0}(X,\beta)]^{\vir} \right) = \int_{[\M_{g,0}(X,\beta)]^{\vir} } 1 .
\]
In fact, a \emph{closed formula} was obtained in \cite{Fan_Lee_2019} in terms of generating functions. We will therefore focus on $0$-pointed invariants ${\GW}_{g, \beta}$.

There have been various attempts at defining Gopakumar--Vafa invariants mathematically. We refer the readers to \cite{Maulik_Toda_2018} and references therein. If one is interested in defining Gopakumar--Vafa invariants to allow for insertions, the proposed generalizations are expected to be compatible with the dilaton and divisor equations. Therefore, the counting of the BPS states is again reduced to similarly defined ${\GV}_{g, \beta}$.

A remarkable relation between GV and GW in \cite{Gopakumar_Vafa_1, Gopakumar_Vafa_2} can be expressed in terms of generating functions:
\begin{equation} \label{e:1.1'}
\begin{split}
 &\sum _{g=0}^{\infty }~\sum _{\beta \in H_{2}(M,\mathbb {Z} )}{\GW}_{g,\beta} q^{\beta }\lambda ^{2g-2} \\
 = & \sum _{g=0}^{\infty }~\sum _{k=1}^{\infty }~\sum _{\beta \in H_{2}(M,\mathbb {Z} )}{\GV}_{g,\beta}{\frac {1}{k}}\left(2\sin \left({\frac {k\lambda }{2}}\right)\right)^{2g-2}q^{k\beta } .
\end{split}
\end{equation}
We note that this formula gives an invertible relation between GV and GW, filtered by genus. Namely, one can obtain all $\{\GV_{g, \beta}\}_{g \leq g_0, \beta}$ from $\{\GW_{g, \beta}\}_{g \leq g_0, \beta}$ and vice versa. In this paper, we use this formula to \emph{define} the Gopakumar--Vafa invariants.

\begin{example}
For the quintic CY3, the above relation can be written as
\begin{equation} \label{e:1.2}
\begin{split}
    {\GW}_{g=0, \beta= d [\op{line}]} =: {\GW}_{d} & = \sum_{e|d} \frac{1}{e^3} {\GV}_{d/e}, \\ 
    {\GV}_{g=0, \beta= d [\op{line}]} =:     {\GV}_{d} & := \sum_{e|d} \frac{\mu(e)}{e^3} {\GW}_{d/e},
\end{split}
\end{equation}
where we have used the M\"obius inversion and $\mu(e)$ is the M\"obius function. For our purpose, we will \emph{use \eqref{e:1.2} as the definition of ${\GV}_{d}$}. The first 4 terms are listed for readers' convenience.
\begin{table}[ht]
\begin{tabular}{|l|l|l|l|l|}
\hline
$d$ & 1 & 2 & 3 & 4   \\ \hline
$\GW_d$ & 2875 & 4876875/8 & 8564575000/27 & 15517926796875/64  \\ \hline
$\GV_d$ & 2875  & 609250 & 317206375 & 242467530000   \\ \hline
\end{tabular}
\end{table}
\end{example}

This \emph{ad hoc} definition has among other things one difficulty. Namely, the Gopakumar--Vafa invariants are to be intrinsically \emph{integers}, while the Gromov--Witten invariants are generally \emph{rational} numbers, as the above table demonstrates. Fortunately, the integrality of this definition has been shown in \cite{Ionel_Parker_2018}.

There is a variant of the Gromov--Witten theory which also produces \emph{integral} invariants, namely, the \emph{quantum $K$-theory} \cite{Givental_2000, Lee_2004}.
This leads to the possibility of relating quantum $K$-invariants with Gopakumar--Vafa invariants for Calabi--Yau threefolds.

\subsection{Quantum $K$-invariants} \label{section_KGW_J}

The formulation of \emph{quantum $K$-theory} is similar to that of quantum cohomology, i.e., Gromov--Witten theory.
The \emph{quantum $K$-invariants}, or $K$-theoretic Gromov-Witten invariants, of $X$ are defined to be
\[
\langle \tau_{d_1}(\Phi_1)\dots \tau_{d_n}(\Phi_n) \rangle^{X, {K}}_{g,n,\beta} :=  \chi \Big(  \M_{g,n}(X,\beta) ; \Big(  \otimes_{i=1}^n \ev_i^*(\Phi_i) L_i^{d_i}  \Big) \otimes \O^{\vir}  \Big) \in \mathbb{Z}.
\]
Here $\Phi_1,\dots,\Phi_n \in K^0(X)$, $d_1,\dots,d_n \in \mathbb{Z}$, and $\O^{\vir}$ is the virtual structure sheaf on $\M_{g,n}(X,\beta)$ \cite{Lee_2004}. As in Gromov--Witten theory, all genus $g$ invariants can be packed into a formal power series, called genus-$g$ descendant potential of $X$:
\[
F^{{K}}_g(t) = \sum_{n\geq 0}\sum_{\beta} \frac{Q^{\beta}}{n!} \langle  t(L),\dots,t(L)  \rangle^{X,{K}}_{g,n,\beta}.
\]
Here the sum is over all curve class $\beta \in H_2(X)_{\geq 0}$. 
$t(q)$ stands for any Laurent polynomial of one variable $q$ with coefficients in $K^0(X)$
\[
 t(q) = \sum_{k\in \mathbb{Z}} \sum_{\alpha=1}^N t_k^{\alpha} \Phi_{\alpha} q^k
\]
with $\{\Phi_{\alpha} \}_{\alpha=1}^N$ a basis in $K^0(X)$. We may rewrite $F^{{K}}_g (t)$ as
\[
F^{{K}}_g(t) = \sum_{n\geq 0}\sum_{\beta}\frac{Q^{\beta}}{n!} \sum_{ \substack{  k_1,\dots,k_n\in \mathbb{Z} \\ \alpha_1,\dots,\alpha_n \in \{1,\dots,N \}  }  }t_{k_1}^{\alpha_1} \cdots t_{k_n}^{\alpha_n} \langle \tau_{k_1}(\Phi_{\alpha_1}), \dots, \tau_{k_n}( \Phi_{\alpha_n} ) \rangle^{X, {K}}_{g,n,\beta}.
\]

\section{Quantum $K$-theory on Calabi--Yau threefolds}

\subsection{GW on CY3} \label{s:3.1}
Before we proceed to quantum $K$-theory for the Calabi--Yau threefolds, we first discuss some relevant statements in Gromov--Witten theory. In the following, we use $\deg_{\mathbb{C}}$ for the Chow degree, i.e., one half of the usual degree in cohomology.

\begin{proposition} \label{p:3.1}
For any Calabi-Yau threefold $X$, if $\deg_{\mathbb{C}} \phi_1 \geq 2$ then
\[
\langle \tau_{k_1}(\phi_1),\dots, \tau_{k_n}(\phi_n) \rangle_{g,n,\beta\neq 0}^{H, \tw} =0,
\]
where $\tw$ denotes cohomological GW invariants with twistings.
\end{proposition}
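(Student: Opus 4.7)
\emph{Proof proposal.} The plan is a dimension reduction: use the Calabi--Yau virtual dimension collapse $\vdim \M_{g,n}(X,\beta) = n$ together with the standard tautological relations (string, dilaton, divisor) adapted to the twisted theory in order to reduce the problem to a $1$-pointed invariant, and then conclude by a direct dimension count.

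First I would record the virtual dimension identity. Since $\dim_{\C} X = 3$ and $c_1(T_X) = 0$,
\[
 \vdim \M_{g,n}(X,\beta) = (\dim X - 3)(1-g) + \langle c_1(T_X), \beta \rangle + n = n,
\]
so the virtual class has complex dimension $n$. In the present framework the ``$\tw$'' twisting is a multiplication of the virtual class by a characteristic class $\alpha$ of the virtual tangent bundle (of Givental--Tonita type, arising in the comparison between quantum $K$-theory and cohomological Gromov--Witten theory). Writing $\alpha = \sum_{d \geq 0} \alpha_d$ with $\deg_{\C} \alpha_d = d$, the degree-$d$ piece of the twisted integrand contributes only if
\[
 \sum_{i=1}^n (\deg_{\C} \phi_i + k_i) + d = n.
\]

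Next I would reduce the number of marks. For each $i \geq 2$ with $\deg_{\C} \phi_i \leq 1$, I would eliminate the insertion $\tau_{k_i}(\phi_i)$ via the string equation ($\phi_i = 1$, $k_i = 0$), the dilaton equation ($\phi_i = 1$, $k_i = 1$), and the divisor equation ($\deg_{\C} \phi_i = 1$, $k_i = 0$), in each case in the twisted theory. Any index $i \geq 2$ carrying a $\phi_i$ with $\deg_{\C} \phi_i \geq 2$ can be swapped with the $i=1$ slot, so by induction on the number of high-degree insertions the problem is reduced to the $1$-pointed case $\langle \tau_{k_1}(\phi_1) \rangle^{X,H,\tw}_{g,1,\beta}$.

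At this point a direct dimension count finishes the argument: $\vdim \M_{g,1}(X,\beta) = 1$, so non-vanishing of the degree-$d$ piece of the twisted integrand requires $\deg_{\C} \phi_1 + k_1 + d = 1$; since $\deg_{\C} \phi_1 \geq 2$, $k_1 \geq 0$ and $d \geq 0$, no such equation can hold and the invariant vanishes.

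The main obstacle is the reduction step, namely establishing the twisted string, dilaton and divisor equations, i.e.\ propagating the standard tautological identities through the twisting class $\alpha$. This requires tracking how the virtual tangent bundle restricts along the forgetful map $\M_{g,n}(X,\beta) \to \M_{g,n-1}(X,\beta)$ and checking that the correction terms are incompatible with a degree-$\geq 2$ insertion at the first mark. I expect the Calabi--Yau condition $c_1(T_X) = 0$ to enter precisely here, forcing the leading correction to vanish so that the degree shift coming from $\alpha$ cannot compensate for the $\deg_{\C} \phi_1 \geq 2$ inequality.
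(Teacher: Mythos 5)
There is a genuine gap in your reduction step. The string, dilaton and divisor equations only eliminate the very special insertions $\tau_0(1)$, $\tau_1(1)$ and $\tau_0(D)$ for $D$ a divisor; they do not remove a general low-degree descendant insertion $\tau_{k_i}(\phi_i)$ with $k_i\geq 1$ (even the descendant divisor equation produces correction terms $\tau_{k_i-1}(D\cup\phi_i)$ rather than deleting the mark). So your induction does not actually reduce an arbitrary $n$-pointed descendant invariant to the $1$-pointed case. On top of that, you defer the twisted versions of these equations to a final paragraph and only ``expect'' the Calabi--Yau condition to make the corrections harmless --- but that is precisely where all the work of your approach would sit, and it is left unestablished. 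As written, the argument does not close.

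The paper's proof shows that no reduction of the number of marked points is needed at all. One applies the projection formula along the forgetful map $\pi_1:\M_{g,n}(X,\beta)\to\M_{g,1}(X,\beta)$ that forgets all but the first mark: since $\ev_1$ on $\M_{g,n}(X,\beta)$ factors through $\pi_1$, the class $\ev_1^*\phi_1$ pulls out of the pushforward, giving
\[
\int_{[\M_{g,1}(X,\beta)]^{\vir}} (\ev_1^*\phi_1)\,(\pi_1)_*\Bigl( T\,\psi_1^{k_1}\prod_{i=2}^{n}\psi_i^{k_i}\ev_i^*\phi_i\Bigr),
\]
and this vanishes outright because $[\M_{g,1}(X,\beta)]^{\vir}$ has virtual dimension $1$ while $\ev_1^*\phi_1$ already has $\deg_{\mathbb{C}}\geq 2$. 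Note that the twisting class $T$ and all other insertions are simply carried along inside the pushforward; nothing needs to be known about how the twisting interacts with tautological relations. If you want to salvage your strategy, replace the string/dilaton/divisor bookkeeping with this single projection-formula step.
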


\begin{proof}
Let 
\[
 \pi_1 : \M_{g,n}(X, \beta) \to \M_{g,1} (X, \beta)
\]
be the forgetful map forgetting the last $n$ marked points and $T\in H^*(\M_{g,n}(X,\beta))$ be the twisting class.
By projection formula
\begin{equation} \label{e:3.1}
\begin{split}
\int_{[\M_{g,n}(X, \beta)]^{\vir}} T \ &\prod_{i=1}^{n} \Big(\psi_i^{k_i} \ev_i^*\phi_i\Big) 
\\
&= \int_{[\M_{g,1}(X, \beta)]^{\vir}} (\ev_1^* \phi_1 ) \ (\pi_1)_*\left( T \ \psi_1^{k_1}\prod_{i=2}^{n} \psi_i^{k_i} \ev_i^*\phi_i \right) .
\end{split}
\end{equation}
Since $[\M_{g,1}(X,\beta)]^{\vir}$ has virtual dimension 1, while $\deg_{\mathbb{C}}(\phi_i) \geq 2$. This completes the proof.
\end{proof}

\begin{proposition} \label{p:3.2}
All descendant Gromov--Witten invariants on a Calabi--Yau threefold $X$ can be reconstructed from $0$-pointed invariants $\{ \langle \cdot \rangle^H_{g,0,\beta} \}_{g, \beta}$.
\end{proposition}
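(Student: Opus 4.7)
The plan is to combine Proposition~\ref{p:3.1} with the three standard reduction equations of Gromov--Witten theory — string, divisor, and dilaton — and run induction on the number of marked points $n$.

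Observe first that the forgetful-map argument behind Proposition~\ref{p:3.1} actually applies whenever $\deg_{\mathbb{C}}\phi_i > 1$, not only for $\deg_{\mathbb{C}}\phi_i \geq 2$: an integrand of real degree strictly exceeding $2\vdim\M_{g,1}(X,\beta)=2$ vanishes for pure dimensional reasons. Hence every nonvanishing insertion may be taken to lie in $H^0 \oplus H^1 \oplus H^2$. Combined with the Calabi--Yau dimension identity $\sum_{i=1}^n (k_i + \deg_{\mathbb{C}}\phi_i) = \vdim\M_{g,n}(X,\beta) = n$, the $n$ summands average to $1$, so any insertion with contribution $\geq 2$ must be balanced by one of contribution $0$ — namely a $\tau_0(1)$.

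Now induct on $n$. If some insertion is $\tau_0(1)$, apply the string equation to pass to an $(n-1)$-pointed invariant. If some insertion is $\tau_1(1)$, apply the dilaton equation. If some insertion is $\tau_0(D)$ with $D \in H^2$, apply the divisor equation; the correction terms $\tau_{k_i-1}(D \cup \phi_i)$ preserve the total contribution $k_i + \deg_{\mathbb{C}}\phi_i$, and after invoking Proposition~\ref{p:3.1} (or its extension above) to discard those with $\deg_{\mathbb{C}}(D \cup \phi_i) > 1$, they feed back into the same induction on one fewer mark. The dimension bookkeeping of the previous paragraph guarantees that at least one of these three moves is always available, and iterating terminates at $n=0$.

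The main obstacle is the residual configurations that escape the three elementary equations: insertions of $H^1$-classes on non-simply-connected CY3, or divisor insertions $\tau_0(D)$ with $D \cdot \beta = 0$ (where the divisor equation loses its leading term). For these one must invoke topological recursion relations (WDVV in genus zero, Getzler-type relations in higher genus) or, equivalently, the explicit closed-form reduction formula obtained in the Fan--Lee work cited immediately before the statement. For simply connected CY3 such as the quintic, where $H^1=0$ and the only interesting divisor can be chosen with $D\cdot\beta\neq 0$, the three elementary equations together with Proposition~\ref{p:3.1} suffice and the induction goes through verbatim.
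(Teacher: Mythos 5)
Your proof follows essentially the same route as the paper's (which is itself only a two-line sketch): the string, dilaton and divisor equations combined with the virtual dimension count behind Proposition~\ref{p:3.1}, deferring to the closed formula of Fan--Lee for whatever the elementary moves do not reach. One correction: the worry about $\tau_0(D)$ with $D\cdot\beta=0$ is unfounded, since the divisor equation expresses the $(n{+}1)$-pointed invariant as a sum of $n$-pointed ones whether or not the leading term $(D\cdot\beta)\langle\cdots\rangle$ survives, so the point count always drops; the only genuine residual case is that of $H^1$-insertions, where your averaging argument really does break down (a contribution $\geq 2$ need not be balanced by a $\tau_0(1)$, e.g.\ $\tau_2(1)\tau_0(\gamma_1)\tau_0(\gamma_2)$ with $\gamma_i\in H^1$ has total contribution $3=n$) and the appeal to topological recursion or Fan--Lee is indeed required.
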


\begin{proof}
This follows from a combination of virtual dimension counts, the string equation, dilaton equation and divisor equation. A closed formula is available in \cite[Proposition~1.6]{Fan_Lee_2019}. 
\end{proof}

\subsection{QK on CY3}
In quantum $K$-theory, things are somewhat different, mostly due to the fact that $K$-theory is more sensitive to the stack structures. For example, let $G$ be a finite group acting on $X$ and $\pi: X \to [X/G]$ the $G$-torsor. In cohomology or Chow, 
\[
 H ([X/G], \mathbb{Q}) = H (X, \mathbb{Q})
\]
and 
\[
 \int_{[X/G]} \alpha = \frac{1}{|G|} \int_X \pi^* \alpha.
\]
However, in $K$-theory,
\[
 K ([X/G]) = K_G (X)
\]
is the $G$-equivariant $K$-theory, which is much richer than $K(X)$. Furthermore, the pushforward of a vector bundle to a point for $X$ is the Euler characteristic of the bundle, while the same operation for $[X/G]$ extracts the $G$-invariant part of the sheaf cohomologies, which is much more intricate.

Nevertheless, one can apply the virtual Hirzebruch--Riemann--Roch theorem on Deligne--Mumford stacks, \cite{Kawasaki_1979, Tonita_RR_2014} which we briefly recall.

Let $M$ be a quasi-smooth DM stack, i.e., a virtual orbifold,  
\[
 IM = \sqcup_i M_i
\]
its inertia stack and $N_i^{\vir}$
the virtual normal bundle. 
Let $\lambda_{-1} (N_i^{\vir})^*$ be the ``$K$-theoretic Euler class'' of $(N_i^{\vir})^*$. For example, if $(N_i^{\vir})^*$ decomposes into a direct sum of line bundles $L_{\alpha}$, then
\[
 \lambda_{-1} (N_i^{\vir})^* = \prod_{\alpha} (1- L_{\alpha}).
\]
Let $g_i$ be the generic automorphism element of $M_i$. For every vector bundle $E$ on $M_i$, $g_i$ acts on $E$. Let $E = \oplus_j E_j$ be an eigenbundle decomposition with eigenvalues $\epsilon_j$ on $E_j$. Denote 
\[
\op{Tr} (E) := \oplus_j \epsilon_j E_j.
\]
Let $F$ be a vector bundle on $M$, and $\tau (F) \in H (IM)$ defined by
\[
 \tau (F) |_{M_i} := \frac{\op{ch} (\op{Tr} F)}{\op{ch} \op{Tr} (\lambda_{-1} (N_i^{\vir})^*)} Td( T^{\vir}_{M_i}) .
\]
The virtual HRR states that
\begin{equation} \label{e:vHRR}
 \chi(M, F) = \sum_i \frac{1}{m_i} \int_{[M_i^{\vir}]}  \tau (F) |_{M_i} ,
\end{equation}
where $m_i$ is the order of the generic automorphism associated to $M_i$. We now prove a $K$-theoretic version of Proposition\ref{p:3.1}.

\begin{proposition}[\cite{Chou_Lee_2023}] \label{p:3.3}
Let $E_1 \in K(X)$ be any element such that $\deg_{\mathbb{C}} \op{ch} (E_1) \ge 2$. Then 
\[
 \langle \tau_{k_1} (E_1) , \ldots , \tau_{k_n} (E_n) \rangle^{K}_{g,n,\beta} =0.
\]
\end{proposition}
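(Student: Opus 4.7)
The plan is to mirror the cohomological dimension argument of Proposition~\ref{p:3.1} after converting the $K$-theoretic Euler characteristic into a sum of cohomological integrals via the virtual Hirzebruch--Riemann--Roch formula~\eqref{e:vHRR}.

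First I would reduce to the $n=1$ case. Applying the $K$-theoretic projection formula along the forgetful map $\pi_1 : \M_{g,n}(X,\beta) \to \M_{g,1}(X,\beta)$ — using that $\ev_1^{*}E_1$ is pulled back from $\M_{g,1}(X,\beta)$, the standard comparison between $L_1$ on the two moduli spaces, and the compatibility of $\O^{\vir}$ with forgetful maps — rewrites the invariant as
\[
 \chi\bigl( \M_{g,1}(X,\beta),\ \ev_1^{*} E_1 \otimes G \otimes \O^{\vir} \bigr)
\]
for some class $G \in K(\M_{g,1}(X,\beta))$ that absorbs all the descendant data at the forgotten marks.

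Next I would apply \eqref{e:vHRR} to $M := \M_{g,1}(X,\beta)$ and $F := \ev_1^{*} E_1 \otimes G$. The resulting sum runs over the components $M_i$ of the inertia stack $IM$. On each $M_i$ the generic automorphism $g_i$ is an automorphism of the domain curve that fixes the first marked point; through the evaluation it then acts trivially on $X$ and hence on the pullback $\ev_1^{*}E_1$. Consequently $\op{Tr}(\ev_1^{*}E_1) = \ev_1^{*}E_1$, and the integrand on $M_i$ acquires a factor $\ev_1^{*}\op{ch}(E_1)$ of Chow degree at least $2$, while the remaining factors $\op{ch}\op{Tr}(G)$, $1/\op{ch}\op{Tr}(\lambda_{-1}(N_i^{\vir})^{*})$ and $\op{Td}(T^{\vir}_{M_i})$ contribute classes of non-negative Chow degree.

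A dimension count closes the argument. Since $X$ is a CY3 one has $\vdim M = 1$, and because the virtual tangent bundle of each sector $M_i$ is the $g_i$-fixed part of $T^{\vir}_{M}|_{M_i}$, $\vdim M_i \leq 1$ as well. Thus the integrand $\tau(F)|_{M_i}$ lies in Chow degrees $\geq 2$ while $[M_i^{\vir}]$ has Chow dimension $\leq 1$, so each integral in the HRR sum vanishes. The main obstacle is the first step: controlling the $K$-theoretic pushforward $\pi_{1*} \O^{\vir}$ and the precise comparison of the line bundles $L_j$ under the forgetful map, well enough to package the descendants at the forgotten marks into a single $K$-class $G$. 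This is technical but fits squarely into the quantum $K$-theory framework (cf.~\cite{Lee_2004, Givental_PEK}).
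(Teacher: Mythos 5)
Your proposal is correct in substance and rests on exactly the two facts the paper isolates: that the generic automorphism of each inertia component acts trivially on classes pulled back via $\ev_1$ from the scheme $X$, so $\op{Tr}(\ev_1^*E_1)=\ev_1^*E_1$, and that every inertia component of $\M_{g,1}(X,\beta)$ has virtual dimension at most $1$, so that a factor of $\op{ch}(\ev_1^*E_1)$ in Chow degree $\geq 2$ kills every summand of the Kawasaki--Tonita formula. The only real difference is the order of the two main steps, and the paper's order is chosen precisely to dissolve the obstacle you flag at the end. You first push forward in $K$-theory along $\pi_1:\M_{g,n}(X,\beta)\to\M_{g,1}(X,\beta)$, which forces you to identify $R\pi_{1*}\bigl(\bigotimes_{j\ge 2}L_j^{k_j}\ev_j^*E_j\otimes\O^{\vir}\bigr)$ as an explicit class $G$ (together with the comparison of $L_1$ on the two spaces); this is doable but genuinely delicate. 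The paper instead applies the virtual HRR formula \eqref{e:vHRR} directly on $\M_{g,n}(X,\beta)$, and only then pushes forward the resulting \emph{cohomology} class $\tau\bigl(L_1^{k_1}\prod_{j\ge 2}L_j^{k_j}\ev_j^*E_j\bigr)$ along the induced map of inertia components; the cohomological projection formula pulls out $\op{ch}(\ev_1^*E_1)$ with no need to recognize the pushforward as anything in particular, and the dimension count finishes exactly as you say. So your argument goes through once the reduction to $n=1$ is justified, but reversing the two steps gives the same conclusion with strictly less machinery.
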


\begin{proof}
The proof is a combination of the virtual orbifold Hirzebruch--Riemann--Roch theorem \eqref{e:vHRR} and the arguments in the proof of Proposition~\ref{p:3.1}, including the virtual dimension count and the projection formula. Here two key facts are used. First, $\ev^*E_1$ is pulled back from a scheme and
\begin{equation} \label{e:3.3}
 \op{Tr} (\ev^*E_1) =\ev^* E_1.
\end{equation}
Second, the virtual dimension of any non-identity component of the intertial stack is no greater than the virtual dimension of the identity component. More precisely,
\[
\begin{split}
 &\chi \left( \M_{g,n}(X, \beta), \prod_{j=1}^n L_j^{\otimes k_j} \ev_j^* E_j \right) \\
 = & \sum_i \frac{1}{m_i} \int_{[(M_n)_i^{\vir}]}  \tau (\prod_{j=1}^n L_j^{\otimes k_j} \ev_j^* E_j)  \\
 = & \sum_i \frac{1}{m_i} \int_{[(M_1)_i]^{\vir}} \op{ch}(\ev_1^* E_1 ) (\pi_1)_*\left( \tau ( L_1^{k_1} \prod_{j=2}^{n} L_j^{k_j} \ev_j^* E_j ) \right) \\
 = & 0,
\end{split}
\]
where we have used the fact that the virtual dimensions of all $(M_1)_i$ are less than or equal to $1$ and $\deg_{\mathbb{C}} \op{ch}(\ev_1^* E_1 ) \geq 2$.
\end{proof}

At this moment, we do not have a general result in $K$-theory corresponding to Proposition~\ref{p:3.2}. Nevertheless, in genus zero there is a reconstruction theorem from the set of all one-pointed descendants (i.e., including the cotangent line bundles) to all quantum $K$-invariants with descendants, in the spirit of the reconstruction theorem in \cite{Lee_Pandharipande_2004}. 
The small $J$-function is a generating function of one-pointed descendants
\[
 J (q, Q)\ :=  (1 -q) +  \sum_{\alpha} \sum_{\beta \neq 0}  \Phi_{\alpha} \langle \frac{\Phi^{\alpha}}{1-qL} \rangle_{0,1,\beta} Q^{\beta} 
\]
where $\{ \Phi_{\alpha}, \Phi^{\alpha} \}$ are dual classes with respect to the $K$-theoretic Poincar\'e pairing
\[
 (  \Phi_{\alpha} , \Phi^{\alpha'} ) := \chi ( X, \Phi_{\alpha} \otimes \Phi^{\alpha'} ) = \delta_{\alpha}^{\alpha'}.
\]

\begin{proposition}[\cite{Chou_Lee_2023}] \label{p:3.5}
The small $J$-function in quantum $K$-theory determines all genus zero (descendant) quantum $K$-invariants for Calabi--Yau threefolds.
\end{proposition}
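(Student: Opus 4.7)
The plan is to mimic the descendant reconstruction strategy of Lee--Pandharipande \cite{Lee_Pandharipande_2004}, adapted to quantum $K$-theory and combined with the sharp vanishing of Proposition~\ref{p:3.3}. First, I would narrow the class of potentially nonzero invariants: by Proposition~\ref{p:3.3}, an $n$-pointed descendant $\langle \tau_{k_1}(E_1),\dots,\tau_{k_n}(E_n)\rangle^{K}_{0,n,\beta}$ with $\beta\neq 0$ vanishes unless $\deg_{\mathbb{C}}\op{ch}(E_i)\leq 1$ for every $i$. Higher Chern character pieces of any insertion contribute zero, so it suffices to consider insertions represented, through their Chern character, by $\O_X$ and by line bundles; the $\beta=0$ contributions reduce to classical $K$-theoretic data on $X$, reflected in the leading $(1-q)$ term of $J(q,Q)$.

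Next, I would peel off the remaining insertions. The $K$-theoretic string and dilaton equations absorb insertions of $\tau_0(\O_X)$ and $\tau_1(\O_X)$, cutting down the number of marked points whenever such insertions appear. The $K$-theoretic divisor equation, as developed by Givental and Tonita \cite{Givental_Tonita_2011, Givental_PEK}, trades a line bundle insertion at a marked point for an operator action on the Novikov variables together with lower-complexity terms. After these reductions, the remaining nontrivial insertions are genuine descendants $\tau_{k\geq 1}$ of a basis of $K^0(X)$. A $K$-theoretic topological recursion relation together with WDVV/associativity, in the spirit of \cite{Lee_Pandharipande_2004}, then allows one to iteratively collect all descendants onto a single marked point: the TRR replaces a descendant on one marked point by expressions involving the quantum $K$-product and invariants of strictly smaller descendant complexity, while WDVV propagates these across the remaining points. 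Combined with the vanishing of Proposition~\ref{p:3.3}, which keeps all insertions throughout the recursion in the restricted class, the cascade terminates with invariants of the form $\langle \tau_k(\Phi^\alpha)\rangle^{K}_{0,1,\beta}$, precisely the coefficients of the small $J$-function.

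The main obstacle is that the $K$-theoretic analogues of the string, dilaton, divisor, and especially the topological recursion relations are noticeably more subtle than their cohomological counterparts, because of the virtual structure sheaf and the grading-mixing effects introduced by virtual orbifold Hirzebruch--Riemann--Roch \eqref{e:vHRR}. The crux is to produce these relations in the sharp form required, and to exhibit a complexity (in the number of marked points, total descendant order, and Novikov filtration) that strictly decreases at each reduction step so that the algorithm terminates at the small $J$-function; this is the technical content carried out in \cite{Chou_Lee_2023}.
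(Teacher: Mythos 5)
Your overall architecture matches the paper's: use Proposition~\ref{p:3.3} plus the virtual orbifold HRR and the dimension count on a CY3 to cut every insertion down to its $\op{ch}_0$ and $\op{ch}_1$ parts, absorb the $\O_X$ insertions with the string equation, and then run a Lee--Pandharipande-style reconstruction to collapse everything onto one-pointed descendants, i.e.\ the small $J$-function. Two of your intermediate steps, however, deserve scrutiny. First, the reduction to line bundles is not a formal ``higher Chern character pieces contribute zero'' projection inside $K^0(X)$; the paper phrases it as an \emph{equality of invariants} obtained from the stacky HRR and the virtual dimension count, namely that $(\ev_i^*E_i - r_i\O)$ may be replaced by $(\ev_i^*\bigwedge^{\mathrm{top}}E_i - \O)$ without changing the correlator, since only $\prod_i c_1(\ev_i^*E_i)$ survives. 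This is the step that makes the Lee--Pandharipande theorem (stated for $K$-theory generated by line bundles) applicable to an \emph{arbitrary} CY3, and it needs the HRR argument, not just Proposition~\ref{p:3.3}.

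Second, and more seriously, your algorithm routes through a ``$K$-theoretic divisor equation'' that trades a line-bundle insertion for an operator on the Novikov variables. No such equation holds in quantum $K$-theory in the naive correlator-level form you need; the $K$-theoretic replacements for the divisor equation (the $q$-difference/adelic structure of Givental--Tonita) are statements about the whole generating function and do not delete a marked point from an individual invariant. This step is also unnecessary: the Lee--Pandharipande reconstruction theorem, which the paper cites, handles primary line-bundle insertions directly via its own boundary/WDVV relations and reconstructs all such descendant invariants from the small $J$-function, so the line-bundle insertions never need to be removed by a divisor equation. If you excise the divisor-equation step and replace it with a direct appeal to that reconstruction theorem (after the HRR-based replacement of vector bundles by their determinants, and after using the string equation and Proposition~\ref{p:3.3} to control the class at the marked point carrying the cotangent line), your argument coincides with the paper's.
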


\begin{proof}
This can be shown by a combination the string equation in the quantum $K$-theory \cite{Lee_2004}, a reconstruction theorem in quantum $K$-theory \cite[(22) (23)]{Lee_2004}, a version of Riemann--Roch for virtually smooth stacks, together with virtual dimension counts.

By the string equation, if any insertion is $1 = \O$, then it can be reduced to $K$-invariants of fewer points.  
One can assume that there is no insertion of $1$ by induction. We can therefore assume that the insertions at all marked point look like
\[
  (\ev_i^*(E_i) - r_i 1) 
\]
where 
$E_i$ are vector bundles of rank $r_i$. 
The Chern character of the $i$-th insertion starts at $\deg_{\mathbb{C}} \geq 1$, otherwise the insertion is $1$, contradictory to the assumption.

One can now apply the Hirzebruch--Riemann--Roch for the virtually smooth stacks. Since the $K$-classes pulled back by evaluations are acted trivially by automorphisms, we conclude that only $\prod_i c_1 ( \ev_i^*(E_i) )$ contribute to stacky HRR by virtual dimension counting. Therefore, the quantum $K$-invariants remain unchanged if we replace $(\ev_i^*(E_i) - r_i 1)$ by the corresponding \emph{line bundles} $(\ev_i^*(\bigwedge^{\op{top}} E_i) - 1)$. That is, one can assume that the insertions are all linear combinations of line bundles.

Then the reconstruction theorem in quantum $K$-theory in \cite{Lee_Pandharipande_2004} applies: any descendant quantum $K$-invariants of insertions by line bundles can be reconstructed from the small $J$-function in the quantum $K$-theory. More precisely, all cotangent lines can be moved to the first marked points by induction. Assuming that there are no cotangent line bundles at any but the first marked point, we may further assume that there are no $1$'s at all other points after applying the string equation. Therefore the quantum $K$-invariants look like
\[
 \ev_1^* (E) \otimes L_1^k \otimes \prod_{j=2}^n (\ev_j^*(E_j) - r_j 1),
\]
where $E$ can be arbitrary, e.g., $1$. 
By Proposition~\ref{p:3.3}, $\ev_1^* (E)$ can only contribute through its $\op{ch}_0$ or $\op{ch}_1$ to the virtual HRR formula. $\op{ch}_0$ can be absorbed by the string equation and 
\[
 c_1 (\ev_i^*(E_i) - r_i 1) = c_1 (\bigwedge^{\op{top}} E_i) - 1).
\]
Thus, vector bundle can be replaced by line bundles without changing the quantum $K$-invariants. The reconstruction theorem in \cite{Lee_Pandharipande_2004} applies. 
\end{proof}

\section{Multiple cover formula and the JMGS conjecture}

\subsection{GV $=$ QK in genus zero}
In \cite{Jockers_Mayr_2019} and \cite{Garoufalidis_Scheidegger_2022} H.~Jockers, P.~Mayr and S.~Garoufalidis, E.~Scheidegger formulate a conjectural relation between the Gopakumar--Vafa invariants and quantum $K$-invariants for the \emph{quintic threefold} $X$.
The conjecture is formulated in terms of \emph{small $J$-functions}, a generating function in quantum $K$-theory as well as quantum cohomology.
We generalize their conjecture to all Calabi--Yau threefolds, which should be implicitly in the original proposal by Jockers and Mayr.

We fix the following notation:
\[
\{
\Phi_{\alpha}
\}_{\alpha=1}^N =  \bigsqcup_{i=0}^3 \{ \Phi_{ij}\}_{j=1}^{n_i},
\]
where $\{ \ch ( \Phi_{ij} )\}_{j=1}^{n_i}$ forms a basis in $H^{2i}(X)$. In particular, 
\[
 \{ \Phi_{0j} \}_{j=1}^{n_0} = \{ \Phi_{01} =\mathcal{O} \}.
\]
Let $\{ \Phi^{ij} \}$ be the dual basis of $\{ \Phi_{ij} \}$ with respect to the K-theoretic Poincar\'e pairing:
\[
(\Phi_a, \Phi_b)^{K} : = \chi (X, \Phi_a \Phi_b).
\]
The following fact will be used later.
\begin{lemma} For $i = 0$ and $1$,
\[ 
\ch( \Phi^{ij} ) \in H^{3-i}(X).
\]
For $i=2$ and $3$,
\[
\ch ( \Phi^{ij}) \in H^{ \geq 3-i} (X).
\]
\end{lemma}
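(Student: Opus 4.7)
The plan is to apply the Hirzebruch--Riemann--Roch formula to rewrite the $K$-theoretic pairing in cohomology, and then extract the vanishing of graded pieces of $\ch(\Phi^{ij})$ from the duality constraints, using the Calabi--Yau hypothesis $c_1(T_X) = 0$.

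First, write $\ch(\Phi^{ij}) = \sum_{k=0}^{3} \psi^{ij,(k)}$ with $\psi^{ij,(k)} \in H^{2k}(X)$. Because $c_1(T_X) = 0$, the Todd class simplifies to $\td(T_X) = 1 + \tfrac{1}{12} c_2(X)$, supported in $H^0(X) \oplus H^4(X)$. Combining HRR with the hypothesis $\ch(\Phi_{i'j'}) \in H^{2i'}(X)$, the duality $(\Phi_{i'j'},\Phi^{ij})^K = \delta_{ii'}\delta_{jj'}$ isolates, as $i'$ varies from $3$ down to $0$, successively: the piece $\psi^{ij,(0)}$; then $\psi^{ij,(1)}$; then the combination $\psi^{ij,(2)} + \tfrac{c_2(X)}{12}\psi^{ij,(0)}$; and finally $\psi^{ij,(3)} + \tfrac{c_2(X)}{12}\psi^{ij,(1)}$.

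I would then handle each $i$ in turn, translating each vanishing integrated pairing into vanishing of the corresponding graded piece via nondegeneracy of Poincar\'e duality on $X$. For $i = 0$, the constraints coming from $i' = 3, 2, 1$ successively kill $\psi^{0j,(0)}, \psi^{0j,(1)}, \psi^{0j,(2)}$, leaving only $\psi^{0j,(3)} \in H^6(X)$, which is $H^{3-i}(X)$ in complex degree. For $i = 1$, analogously $\psi^{1j,(0)} = \psi^{1j,(1)} = 0$; the $i' = 1$ equation pins down $\psi^{1j,(2)} \in H^4(X)$; and the $i' = 0$ equation, with the $c_2$ correction now vanishing because $\psi^{1j,(1)} = 0$, collapses to $\int_X \psi^{1j,(3)} = 0$, which in the one-dimensional $H^6(X)$ forces $\psi^{1j,(3)} = 0$. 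For $i = 2$, only $\psi^{2j,(0)} = 0$ and $\psi^{2j,(2)} = 0$ survive as vanishings, while $\psi^{2j,(3)}$ is pinned to a generically nonzero multiple of $c_2(X)$ via the $i' = 0$ constraint; thus $\ch(\Phi^{2j}) \in H^2(X) \oplus H^6(X) \subset H^{\geq 2}(X)$. The case $i = 3$ is tautological.

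The subtle point to keep in mind is the Todd correction $c_2(X)/12$, which couples $\psi^{ij,(0)}$ and $\psi^{ij,(1)}$ into the $i' = 1$ and $i' = 0$ constraints respectively. For $i = 0, 1$ the higher-$i'$ constraints have already eliminated both of these lower pieces, so the coupling is trivial and $\ch(\Phi^{ij})$ is forced to lie in the single top degree $H^{2(3-i)}(X)$; for $i \geq 2$ surviving lower pieces can be lifted by $c_2$ into higher degrees, which is precisely why the bound must weaken from ``exactly $H^{3-i}$'' to ``$H^{\geq 3-i}$''.
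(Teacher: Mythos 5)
Your argument is correct, and it arrives at the lemma by a genuinely different route from the paper. The paper simply writes down the closed formula $\Phi^{ij} = \ch^{-1}\big(\td(TX)^{-1}\,PD(\ch(\Phi_{ij}))\big)$ for the dual basis, checks via HRR that it satisfies the duality, and reads off the degree bounds from $PD(\ch(\Phi_{ij})) \in H^{2(3-i)}(X)$ and $\td(TX)^{-1} \in 1 + H^{\geq 2}(X)$ --- a two-line computation. You instead treat the duality relations $(\Phi_{i'j'},\Phi^{ij})^K = \delta_{ii'}\delta_{jj'}$ as a linear system, expand them by HRR with $\td(TX) = 1 + c_2(X)/12$, and solve for the graded pieces $\psi^{ij,(k)}$ degree by degree using nondegeneracy of the Poincar\'e pairing between $H^{2k}(X)$ and $H^{2(3-k)}(X)$. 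Your bookkeeping of the $c_2/12$ coupling between $\psi^{(0)},\psi^{(2)}$ and between $\psi^{(1)},\psi^{(3)}$ is accurate, and the case analysis correctly explains why the conclusion sharpens to a single degree for $i=0,1$ but only a lower bound for $i=2,3$. What your approach buys is slightly finer information (e.g.\ that $\ch(\Phi^{2j})$ has no $H^4$ component and that its $H^6$ part is an explicit multiple of $\int_X c_2\,\psi^{2j,(1)}$) and independence from guessing the closed formula; what it costs is length --- the paper's formula packages all four constraints at once. Both proofs use the same two inputs (HRR for the pairing and $c_1(T_X)=0$), so the difference is one of organization rather than substance.
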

\begin{proof}
$\Phi^{ij}$ can be written as:
\[
\Phi^{ij}:= \ch^{-1} \Big(  \td(TX)^{-1} PD (\ch (\Phi_{ij})) \Big), 
\]
where PD denotes the Poincar\'e dual. The lemma follows from the definition of $\Phi_{ij}$ and that $\td(TX)^{-1} \in 1 + H^{\geq 2}(X)$.
\end{proof}


We now generalize the JMGS conjecture to general Calabi--Yau threefolds.

\begin{conjecture} [{cf.~\cite{Jockers_Mayr_2019, Garoufalidis_Scheidegger_2022}}]  
\label{conjectureJK}
\[
\begin{split}
 \frac{1}{1-q} \left[ J(q, Q)\right] := & \frac{1}{1-q} \left[  (1 -q) +  \sum_{\alpha} \sum_{\beta \neq 0}  \Phi_{\alpha} \langle \frac{\Phi^{\alpha}}{1-qL} \rangle_{0,1,\beta} Q^{\beta} \right]
\\
 = & 1+ \sum_{\vec{d} \in H_2(X,\mathbb{Z})} \sum_{r=1}^{\infty}  \bigg[ \sum_{j=1}^{n_1} \Phi^{1j} \Big(\int_{\vec{d}} \ch ( \Phi_{1j}) \Big) \, a(r, q^r) \, \GV_{\vec{d}} \, Q^{r\vec{d}} \\
 & \qquad \qquad \qquad  + \Phi^{01}  b(r, q^r) \, \GV_{\vec{d}} \, Q^{r \vec{d}} \bigg], 
\end{split}
\]
where
\begin{equation} \label{e:4.1}
\begin{split}
     a(r,q^r) &= \frac{(r-1)}{1-q^r} + \frac{1}{(1-q^r)^2},
    \\
     b(r,q^r) &= \frac{r^2-1}{1-q^r} + \frac{3}{(1-q^r)^2} -\frac{2}{(1-q^r)^3}.
\end{split}
\end{equation}
\end{conjecture}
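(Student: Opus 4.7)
The plan is to combine three ingredients: the virtual Hirzebruch--Riemann--Roch theorem \eqref{e:vHRR}, the genus-zero Gopakumar--Vafa formula \eqref{e:1.1'}, and a $K$-theoretic multiple-cover formula for rigid rational curves. First I would apply the virtual HRR to each 1-pointed $K$-theoretic invariant $\langle \Phi_\alpha/(1-qL)\rangle^{K}_{0,1,\beta}$ appearing in $J(q,Q)$. By the argument in the proof of Proposition~\ref{p:3.3}---virtual dimension counting combined with $\op{Tr}(\ev^*E) = \ev^*E$ on inertia components---any insertion $\Phi_\alpha$ with $\deg_{\mathbb{C}}\op{ch}(\Phi_\alpha)\ge 2$ contributes zero. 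Hence the only surviving coefficients of $J - (1-q)$, expanded in the dual basis $\{\Phi^{ij}\}$, are those dual to the unit $\Phi_{01} = \O$ and to the divisor classes $\Phi_{1j}$, which is precisely the shape of the conjectured right-hand side.

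Next, the virtual HRR rewrites each surviving $K$-invariant as a cohomological integral on $\M_{0,1}(X,\beta)$ and its inertia components, twisted by $\op{ch}(L^k)$, $\td(T^{\vir})$, and inertia contributions. I would then invoke the genus-zero GV formula \eqref{e:1.1'} to reorganize this cohomological generating function as a sum, over effective classes $\vec{d}$ and multiplicities $r \ge 1$, of multiple-cover contributions to the GW theory of a rigid rational curve of class $\vec{d}$ with normal bundle $\O(-1)\oplus\O(-1)$, each weighted by $\GV_{\vec{d}}$. The factor $\int_{\vec{d}} \op{ch}(\Phi_{1j})$ appearing in the conjecture emerges naturally via the divisor equation as one reduces a 1-pointed invariant with divisor insertion to an unpointed one on the cover class $r\vec{d}$.

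Finally, one explicitly computes the $K$-theoretic multiple-cover integrals: for each $r$ evaluate
\[
\chi\bigl(\M_{0,1}(\PP^1, r),\, \O^{\vir}\otimes L_1^k \otimes \ev_1^*\phi\bigr),
\]
with $\O^{\vir}$ built from the twisted obstruction coming from $H^1(f^*(\O(-1)^{\oplus 2}))$, by torus localization on $\M_{0,1}(\PP^1,r)$ in the spirit of Givental's localization formulas in quantum $K$-theory. Resumming the geometric series $\sum_k q^k L^k$ should yield precisely the rational functions $a(r,q^r)$ and $b(r,q^r)$ of \eqref{e:4.1} when $\phi$ is a divisor and the top class respectively, after which one matches the two sides coefficient by coefficient in $Q^{r\vec{d}}$.

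The hardest step, I expect, is the second one: rigorously reducing the cohomological GW generating function of a general CY3 to a sum of multiple-cover contributions over isolated rigid rational curves. This is essentially the same difficulty as giving a rigorous geometric meaning to the GV formula itself, involving delicate excess-intersection analysis for non-rigid rational curves, non-reduced moduli components, and deformations of the ambient threefold; compare the symplectic treatment of \cite{Ionel_Parker_2018}. In the $K$-theoretic setting this is further complicated by the sensitivity of $\O^{\vir}$ to nonreducedness and by the inertia-stack corrections in the virtual HRR, which have no direct analog in cohomology. The third step, by contrast, is a concrete localization computation on a well-understood moduli space and should be tractable by the methods of Givental and Givental--Tonita.
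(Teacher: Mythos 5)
Your step 2 is a genuine gap, and it is the decisive one. You propose to ``reorganize the cohomological generating function as a sum over multiple-cover contributions of rigid rational curves with normal bundle $\O(-1)\oplus\O(-1)$, weighted by $\GV_{\vec d}$,'' and you correctly observe that making this rigorous is as hard as the geometric GV conjecture itself (excess intersection for non-rigid curves, non-reduced components, etc.). That is exactly the ``virtual Clemens conjecture'' heuristic of Section 4.3, which the paper explicitly presents as a guide to \emph{guessing} the formula while stating that ``the actual proof follows a completely different approach.'' As written, your argument only proves the conjecture for threefolds whose rational curves are all isolated $(-1,-1)$ curves, i.e.\ it is conditional on an unproven geometric hypothesis. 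The way out, which you miss, is that no geometric decomposition is needed at all: in this paper $\GV_{0,\vec d}$ is \emph{defined} from the $\GW_{0,\beta}$ by the invertible formal relation \eqref{e:1.1'} (M\"obius inversion of $\GW_{0,\beta}=\sum_{k\mid\beta}k^{-3}\GV_{0,\beta/k}$). So the correct strategy is to show that the quantum $K$-theoretic $J$-function is a \emph{universal} expression in the genus-zero Gromov--Witten numbers $\{\GW_{0,\beta}\}$ and classical topological data (via the Givental--Tonita characterization of genus-zero quantum $K$-theory in terms of quantum cohomology, together with Propositions~\ref{p:3.1}--\ref{p:3.3} and \ref{p:3.5} to reduce everything to the relevant coefficients), and then to determine the universal coefficients by evaluating on the one local model $X_{-1,-1}$, where Lemma~\ref{l:4.4} gives $\GV_{0,1}=1$ and all other $\GV$ vanish. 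The multiple-cover computation is thus the \emph{last} step that pins down $a(r,q^r)$ and $b(r,q^r)$, not a geometric decomposition of $X$.

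Your steps 1 and 3 are essentially sound and match the paper. Step 1 (virtual HRR plus $\op{Tr}(\ev^*E)=\ev^*E$ and dimension counting to kill insertions with $\deg_{\mathbb C}\ch\ge 2$) is Proposition~\ref{p:3.3} together with the Lemma on $\ch(\Phi^{ij})$, and it correctly explains why only the $\Phi^{01}$ and $\Phi^{1j}$ components survive. Step 3 computes the right objects, though the paper obtains $a$ and $b$ not by direct torus localization on $\M_{0,1}(\PP^1,r)$ with the twisted obstruction bundle, but by writing the small $I$-function of the toric compactification $Y_{-1,-1}=P_{\PP^1}(\O(-1)\oplus\O(-1)\oplus\O)$ restricted to zero-section classes (Lemma~\ref{l:4.5}), simplifying it using the relations in $K(Y_{-1,-1})$, and then passing from $I$ to $J$ by the explicit reconstruction/Birkhoff factorization of Givental (Proposition~\ref{p:4.6}, Corollary~\ref{c:4.8}). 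Either computation should give the same answer, but the $I$-function route avoids having to set up equivariant quantum $K$-theoretic localization with the $\O(-1)^{\oplus 2}$ twist from scratch.
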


The main result of Part II is a proof of this conjecture. 
\begin{theorem}[\cite{Chou_Lee_2023}] \label{t:4.3}
    Conjecture ~\ref{conjectureJK} holds.
\end{theorem}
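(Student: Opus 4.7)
The plan is to reduce the $K$-theoretic identity of Conjecture~\ref{conjectureJK} to a cohomological computation via the virtual orbifold Hirzebruch--Riemann--Roch theorem \eqref{e:vHRR}, and then to package the resulting contributions using the Gopakumar--Vafa formula \eqref{e:1.1'}. First I would expand $1/(1-qL) = \sum_{k \ge 0} q^k L^k$ in each coefficient of $J(q,Q)$ and apply vHRR to rewrite every one-pointed $K$-invariant $\la \Phi^{\alpha} L^k \ra_{0,1,\beta}^{K}$ as a sum of cohomological integrals over the components $M_i$ of the inertia stack $I\M_{0,1}(X,\beta)$. For the identity component, this is essentially the cohomological descendant $J$-function multiplied by Todd contributions; for the twisted components, one gets genuinely new terms coming from stable maps with nontrivial automorphisms, i.e.\ from multiple covers of lower-degree stable maps.

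Next, I would use Proposition~\ref{p:3.3} to strip away the redundant directions of the basis. By the lemma preceding the conjecture, $\ch(\Phi^{0j}) \in H^3(X)$ and $\ch(\Phi^{1j}) \in H^2(X)$, so $\deg_{\C} \ch(\Phi^{ij}) \ge 2$ exactly when $i \in \{0,1\}$; Proposition~\ref{p:3.3} then kills those components in the $\{\Phi_{ij}\}$-expansion of $J$, so the only surviving components are those indexed by $i \in \{2,3\}$. Changing to the dual basis $\{\Phi^{ij}\}$ (which is natural in view of the $K$-theoretic Poincar\'e pairing), this corresponds precisely to $K$-theory classes supported in the $\Phi^{01}$ and $\Phi^{1j}$ directions of the right hand side, matching the form of the conjecture. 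The cohomological integrand on the identity component then reduces, by a $\vdim=1$ argument as in Proposition~\ref{p:3.1}, to the point class or to a divisor class, producing respectively a $\GW_{0,\beta}$ factor and a factor of $\int_{\beta} \ch(\Phi_{1j})$.

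The heart of the proof is then a careful evaluation of the twisted sector contributions. For $\beta = r\vec{d}$ the relevant inertia components arise from the $r$-fold multiple covers of stable maps in class $\vec{d}$; the generic automorphism is cyclic of order $r$, and one must compute the equivariant Chern character $\op{Tr}(L_1^k)$ and the equivariant $\lambda_{-1}((N^{\vir})^*)$. By the key fact \eqref{e:3.3}, pullbacks from $X$ are fixed by these automorphisms, so the stacky weights only involve $\psi$-class data and the relative obstruction bundle along the cover. After applying projection formula and virtual dimension counting as in Proposition~\ref{p:3.3}, the $q$-dependence at a fixed $r$ collapses to a universal rational function of $q^r$ coming from the geometric series $\sum_k q^k \op{Tr}(L_1^k)|_{M_i}$ weighted by the automorphism eigenvalues. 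Assembling the identity sector and the twisted sectors, one reorganizes the double sum $\sum_{g,\beta} \GW_{g,\beta}$ appearing in \eqref{e:1.1'} into a double sum over $\vec{d}$ and $r$ by M\"obius-type inversion (as in \eqref{e:1.2}), and one reads off two universal rational functions of $(r,q^r)$: one multiplying $(\int_{\vec{d}} \ch(\Phi_{1j}))\,\GV_{\vec{d}}$ and one multiplying $\GV_{\vec{d}}$ alone.

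The main obstacle I expect is Step three: identifying the inertia components of $\M_{0,1}(X,\beta)$ precisely enough, and carrying out the equivariant Chern-character/Todd bookkeeping on each twisted sector so that the sum visibly yields the rational functions $a(r,q^r)$ and $b(r,q^r)$ of \eqref{e:4.1}. A clean way to organize this is to first prove it on the local $(-1,-1)$ curve, where only a single primitive class contributes and the entire calculation reduces to a hypergeometric-type generating function in $q$ and $r$ whose expansion directly produces $a$ and $b$; the passage to a general CY3 then follows from the fact that both sides are linear functionals of $\{\GV_{\vec{d}}\}$ and that, by Proposition~\ref{p:3.5}, the small $J$-function already captures all genus-zero information.
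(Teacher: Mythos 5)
Your plan is essentially a rigorization of the heuristic the paper gives in the ``virtual Clemens' conjecture'' subsection, and the paper explicitly warns that the actual proof of Theorem~\ref{t:4.3} ``follows a completely different approach'' (the full argument is deferred to \cite{Chou_Lee_2023}; what is proved rigorously here is only the local multiple cover formula, Proposition~\ref{p:4.6} and Corollary~\ref{c:4.8}, and that is done via the toric $I$-function of the compactification $Y_{-1,-1}$ together with Givental's explicit reconstruction/Birkhoff factorization, not by a direct Kawasaki--Hirzebruch--Riemann--Roch analysis of twisted sectors). So the first two steps of your outline (vHRR, then killing the $\Phi^{0j}$ and $\Phi^{1j}$ insertion directions by Proposition~\ref{p:3.3} and the degree lemma) are sound and consistent with the paper's Propositions~\ref{p:3.1}--\ref{p:3.3}, but the ``heart of the proof'' is where the gap lies.

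The gap is the final reduction: you assert that ``both sides are linear functionals of $\{\GV_{\vec d}\}$'' and that the general case therefore follows from the local $(-1,-1)$ computation. This is circular. The left-hand side depends on $X$ through the geometry of $\M_{0,1}(X,\beta)$ and its inertia stack, i.e.\ through the full genus-zero Gromov--Witten theory of $X$; the assertion that this dependence factors through $\{\GV_{0,\vec d}\}$ with the \emph{universal} coefficients $a(r,q^r)$ and $b(r,q^r)$ is precisely the content of Conjecture~\ref{conjectureJK}, not an input. For a general Calabi--Yau threefold the virtual Clemens conjecture is not available, so the twisted sectors of $I\M_{0,1}(X,\beta)$ are not a disjoint union of multiple-cover loci of isolated $(-1,-1)$ curves: they are loci of stable maps with automorphisms covering arbitrary (possibly reducible, singular, non-isolated) image curves, and your ``hypergeometric generating function'' computed on the local curve does not transfer to them without an argument establishing universality. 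Proposition~\ref{p:3.5} does not close this gap either --- it says the small $J$-function \emph{determines} all genus-zero quantum $K$-invariants, not that the small $J$-function is determined by (let alone linear in) the Gopakumar--Vafa invariants. To make your strategy work one would need a structural result in the style of the Givental--Tonita adelic characterization, expressing the genus-zero quantum $K$-theoretic $J$-function as a universal transform of the cohomological genus-zero data, combined with Propositions~\ref{p:3.1}--\ref{p:3.2} (which reduce that data to $\{\GW_{0,\beta}\}$) and the genus-zero M\"obius inversion \eqref{e:1.2}; that missing intermediate theorem is where all the work is.
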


\begin{remark}
By Proposition~\ref{p:3.5}, Theorem~\ref{t:4.3} implies that the quantum $K$-theory and Gopakumar--Vafa theory are equivalent for all Calabi--Yau threefolds in genus zero.
\end{remark}

\subsection{Multiple cover formula}

\begin{lemma} \label{l:4.4}
For the total space $X_{-1,-1}$ of $\O(-1) \oplus \O (-1) \to P^1$, the Gopakumar--Vafa invariants $\op{GV}_{0,1} =1$ (genus zero and degree $1$) and $\op{GV}_{g,d} =0$ otherwise
\end{lemma}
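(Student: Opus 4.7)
The plan is to reduce everything to the classical multiple cover formula for the resolved conifold. Since $X_{-1,-1}$ is non-compact, I would first note that any stable map to $X_{-1,-1}$ of non-zero class $d[\mathbb{P}^1]$ is set-theoretically supported on the zero section $\mathbb{P}^1$, so the moduli space of maps coincides with $\overline{M}_{g,0}(\mathbb{P}^1, d)$, which is compact; the perfect obstruction theory differs from that of $\overline{M}_{g,0}(\mathbb{P}^1, d)$ by the obstruction bundle $R^1\pi_* f^*(\mathcal{O}(-1)\oplus \mathcal{O}(-1))$. Hence the (local) Gromov--Witten invariants of $X_{-1,-1}$ are well-defined and given by
\[
\mathrm{GW}_{g,d}(X_{-1,-1}) \;=\; \int_{[\overline{M}_{g,0}(\mathbb{P}^1, d)]^{\mathrm{vir}}} e\bigl(R^1\pi_* f^*(\mathcal{O}(-1)\oplus \mathcal{O}(-1))\bigr).
\]

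Next I would invoke the multiple cover formula of Faber--Pandharipande, proven by $\mathbb{C}^*$-equivariant localization on $\overline{M}_{g,0}(\mathbb{P}^1, d)$ with respect to the natural torus action on $\mathbb{P}^1$: the generating function of these local invariants is
\[
\sum_{g\geq 0}\sum_{d\geq 1} \mathrm{GW}_{g,d}(X_{-1,-1})\, \lambda^{2g-2}\, q^{d}
\;=\; \sum_{k=1}^{\infty} \frac{1}{k}\,\bigl(2\sin(k\lambda/2)\bigr)^{-2}\, q^{k}.
\]
This is precisely the right-hand side of the Gopakumar--Vafa formula \eqref{e:1.1'} when the only nonzero GV invariant is $\mathrm{GV}_{0,\,[\mathbb{P}^1]} = 1$: all other terms on the RHS vanish by hypothesis, and the surviving $g=0$, $\beta = [\mathbb{P}^1]$ contribution produces exactly $\sum_k \tfrac{1}{k}(2\sin(k\lambda/2))^{-2}q^k$.

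Finally I would appeal to the invertibility of \eqref{e:1.1'}, noted after its statement in the paper: the map $\{\mathrm{GV}_{g,\beta}\}_{g\leq g_0,\beta} \mapsto \{\mathrm{GW}_{g,\beta}\}_{g\leq g_0,\beta}$ is a triangular, invertible transformation once organized by genus and degree. Matching the two generating functions and solving genus-by-genus, starting from genus zero and degree one, forces $\mathrm{GV}_{0,1}=1$ and $\mathrm{GV}_{g,d}=0$ in all other bidegrees.

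The substantive ingredient, and the only non-formal step, is the Faber--Pandharipande localization computation giving the closed form $\sum_k \tfrac{q^k}{k(2\sin(k\lambda/2))^2}$; everything else is a rearrangement of generating series together with the (purely formal) inversion of the GV transform. I would therefore cite this input rather than reproduce it, as it is independent of the quantum $K$-theoretic content of the paper.
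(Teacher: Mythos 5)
Your proposal is correct and follows essentially the same route as the paper: both arguments feed the known multiple cover contributions of a $(-1,-1)$ curve into the Gopakumar--Vafa relation \eqref{e:1.1'} and use its invertibility to read off $\GV_{0,1}=1$ and the vanishing of all other $\GV_{g,d}$. The only cosmetic difference is that you quote the Faber--Pandharipande all-genus generating function $\sum_k \frac{1}{k}\bigl(2\sin(k\lambda/2)\bigr)^{-2}q^k$ in closed form, whereas the paper lists the same input genus by genus (Voisin--Aspinwall--Morrison for $g=0$, BCOV/Graber--Pandharipande for $g=1$, Faber--Pandharipande for $g\geq 2$).
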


\begin{proof}
This follows from the Gopakumar--Vafa equation \eqref{e:1.1'}, and the results in Gromov--Witten theory: for $g=0$ the Voisin--Aspinwall--Morrison formula
\[
 \op{GW}_{0,0,d} = \frac{1}{d^3},
\]
for $g=1$, the BCOV and Graber--Pandharipande formula
\[
 \op{GW}_{1,0,d} = \frac{1}{12 d},
\]
and for $g \geq 2$ the Faber--Pandharipande formula
\[
 \op{GW}_{g,0,d} = \frac{|B_{2g}| d^{2g-3}}{2g \cdot (2g-2)!}.
\]
\end{proof}

In order to consider the small $J$-function of $X_{-1,-1}$ in quantum $K$-theory, we consider its compatification by the ``infinity divisor''. Let 
\[
 Y_{-1,-1} := P_{P^1}(\mathcal{O}(-1)\oplus \mathcal{O}(-1)\oplus \mathcal{O} ).
 \]
Let $P = \pi^*\mathcal{O}(-1)$ with $\pi: Y_{-1,-1} \rightarrow P^1$, and $t = \mathcal{O}(D_{\infty})$ with $D_{\infty}\subset Y_{-1,-1}$, the infinity divisor. We denote $Q^r := Q^{r\ell}$, where 
\[
 \ell := [P^1] \xhookrightarrow{0} Y_{-1,-1}
\] 
is the line class in the ``zero section'' of the projective bundle. In the following, we consider the \emph{specialized} small $I$-function and $J$-function of $Y_{-1,-1}$ only for the curve classes in the zero section, i.e., multiples of $\ell$.

\begin{lemma} \label{l:4.5} 
The small $I$-function for 
$Y_{-1,-1}$, with curve classes in the zero section, is
\[
  I^{Y_{-1,-1}}(q, Q) = (1-q) \left[ 1+ \sum_{r=1}^{\infty} Q^r \frac{(1-Pt)^2 \prod_{m=1}^{r-1}(1-Pt q^m)^2}
  {(Pt)^{2r}q^{r(r-1)} \prod_{m=1}^r (1-Pq^m)^2} \right].
\]
\end{lemma}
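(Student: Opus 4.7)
The plan is to realize $Y_{-1,-1}=\mathbb{P}_{\mathbb{P}^1}(\mathcal{O}(-1)\oplus\mathcal{O}(-1)\oplus\mathcal{O})$ as a smooth projective toric Fano threefold and to apply the $K$-theoretic toric mirror theorem (in the style of Givental and collaborators), specialized to curve classes $\beta=r\ell$ in the zero section. The fan of $Y_{-1,-1}$ has five primitive rays, giving five torus-invariant divisors: two fiber divisors $F_0,F_\infty$ over the torus-fixed points of the base $\mathbb{P}^1$, and three ``section'' divisors $E_1,E_2,E_3$ corresponding to the three summands of $E$. Using the ``lines'' convention $\mathcal{O}_{\mathbb{P}(E)}(-1)\hookrightarrow\pi^*E$ (so that $\mathcal{O}(E_i)=\mathcal{O}_{\mathbb{P}(E)}(1)\otimes\pi^*L_i$), together with the identifications $P=\pi^*\mathcal{O}(-1)$ and $t=\mathcal{O}(D_\infty)=\mathcal{O}_{\mathbb{P}(E)}(1)$, the five divisor line bundles become $\mathcal{O}(F_0)=\mathcal{O}(F_\infty)=P^{-1}$, $\mathcal{O}(E_1)=\mathcal{O}(E_2)=tP$ and $\mathcal{O}(E_3)=t$.

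Next I would compute the intersection numbers with the zero section $\ell$. Since $\ell$ is a section of $\pi$ we get $F_0\cdot\ell=F_\infty\cdot\ell=1$; and since $\ell$ sits inside $Y_{-1,-1}$ as the sub-line-bundle $L_3=\mathcal{O}\hookrightarrow E$, the tautological bundle restricts as $t|_\ell=\mathcal{O}_{\mathbb{P}^1}$, giving $E_1\cdot\ell=E_2\cdot\ell=\deg((tP)|_\ell)=-1$ and $E_3\cdot\ell=0$. Substituting $\beta=r\ell$ into the toric $K$-theoretic $I$-function formula, the two fiber divisors contribute a squared factor $\prod_{m=1}^r(1-Pq^m)^{-2}$, the two negatively-intersecting section divisors contribute a squared factor $\prod_{m=0}^{r-1}(1-(Pt)^{-1}q^{-m})^2$, and $E_3$ contributes trivially, producing
\[
I(q,Q)=(1-q)\Bigl[\,1+\sum_{r\ge 1}Q^{r}\,\frac{\prod_{m=0}^{r-1}(1-(Pt)^{-1}q^{-m})^2}{\prod_{m=1}^r(1-Pq^m)^2}\,\Bigr].
\]

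The last step is an algebraic rearrangement: the identity $1-(Pt)^{-1}q^{-m}=-(Pt)^{-1}q^{-m}(1-Ptq^m)$ combined with $(-1)^{2r}=1$ and $\sum_{m=0}^{r-1}m=r(r-1)/2$ yields
\[
\prod_{m=0}^{r-1}(1-(Pt)^{-1}q^{-m})^2=(Pt)^{-2r}q^{-r(r-1)}\prod_{m=0}^{r-1}(1-Ptq^m)^2,
\]
and separating the $m=0$ factor in the last product as $(1-Pt)^2$ gives exactly the stated formula. The main obstacle I anticipate is the bookkeeping of conventions: matching the choice of orientation for $\mathbb{P}(E)$ (lines vs.\ quotients) against the sign convention in the toric $K$-theoretic $I$-function (whether $L_\rho$ denotes $\mathcal{O}(D_\rho)$ or $\mathcal{O}(-D_\rho)$, and whether one uses $q^m$ or $q^{-m}$ in the relevant products) so that the monomial prefactor comes out as exactly $(Pt)^{2r}q^{r(r-1)}$ and not a nearby variant. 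As a safety check I would verify the leading $r=1$ coefficient $(1-Pt)^2/[(Pt)^2(1-Pq)^2]$ directly by torus-equivariant localization on $\M_{0,1}(Y_{-1,-1},\ell)$, which is tractable since this moduli space admits an explicit description in terms of the zero section.
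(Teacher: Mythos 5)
Your proposal is correct and follows essentially the same route as the paper: both invoke Givental's $K$-theoretic toric $I$-function specialized to $\beta = r\ell$, arrive at the intermediate expression with numerator $\prod_{m=-r+1}^{0}(1-P^{-1}t^{-1}q^{m})^2$ (your $\prod_{m=0}^{r-1}(1-(Pt)^{-1}q^{-m})^2$ is the same product reindexed), and then perform the algebraic rearrangement extracting the prefactor $(Pt)^{-2r}q^{-r(r-1)}$. You simply make explicit the toric divisor bookkeeping and the ``simple manipulation'' that the paper leaves to the reader.
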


\begin{proof}
This lemma follows from the computations of the $I$-functions for toric manifolds via fixed point localization by A.~Givental and collaborators. See \cite{Givental_Tonita_2011, Givental_PEK}. Their formula for small $I$-function gives
\[
  I^{Y_{-1,-1}}(q, Q) = (1-q) \left[ 1+ \sum_{r=1}^{\infty} Q^r \frac{\prod_{m={-r+1}}^{0}(1-P^{-1} t^{-1} q^m)^2}
  {\prod_{m=1}^r (1-Pq^m)^2} \right].
\]
A simple manipulation gives the above presentation.
\end{proof}

We note that in this case the small $I$-function includes a factor of $(1-Pt)^2$, the $K$-theoretic normal bundle of $P^1$ embedded in $Y_{-1,-1}$. One may think of this as an $I$-function of a toric completion $P_{P^1} (\O(-1) \oplus \O (-1) \oplus \O)$, with curve class in the base $P^1$. 
This small $I$-function is different from the small $J$-function, as it has poles at $q=0$. 
(However, it satisfies $\lim_{q \to \infty} I^{X_{-1,-1}}(q) =0$.) Using this $I$-function one can obtain the small $J$-function via the ``\emph{generalized} mirror transform'' (also known as the explicit reconstruction, or Birkhoff factorization).

\begin{proposition} \label{p:4.6}
The small $J$-function for 
$Y_{-1,-1}$, with curve classes in the zero section, is 
\begin{equation} \label{e:4.2}
\begin{split}
\frac{1}{1-q} & J^{Y_{-1,-1}}(q, Q) = 1 + (1-Pt)^2\Big(1 +(1-P)\Big)\sum_{r \geq 1} \, Q^r a(r, q^r) 
\\
& + (1-Pt)^2(1-P) \sum_{r\geq 1} Q^r \, b(r, q^r) , 
\end{split} 
\end{equation}
where $a(r, q^r), b(r, q^r)$ are defined in \eqref{e:4.1}.
\end{proposition}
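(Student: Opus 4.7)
The plan is to apply Birkhoff factorization (the generalized mirror transform) to the $I$-function of Lemma~\ref{l:4.5} in order to extract the small $J$-function. Both $I^{Y_{-1,-1}}$ and $J^{Y_{-1,-1}}$ are points on the same Givental Lagrangian cone $\mathcal{L}^{Y_{-1,-1}}$ in $K$-theoretic quantum cohomology, and $J$ is characterized as the unique representative of the ruling through $I$ whose $q$-expansion has poles only at $q = 1$ (the ``negative polarization''), whereas $I^{Y_{-1,-1}}/(1-q)$ additionally has double poles at the shifted values $q = P^{-1/m}$ coming from the denominator $\prod_{m=1}^{r}(1-Pq^m)^2$. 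Subtracting from $I$ the appropriate tangent vectors to $\mathcal{L}^{Y_{-1,-1}}$, which take the form $(1-q)\cdot(\text{Laurent polynomial in }q)$ with $K(Y_{-1,-1})$-coefficients, is the operation that converts $I$ into $J$.

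Concretely, I would first symmetrize the $r$-th summand of $I^{Y_{-1,-1}}/(1-q)$ using the identity
\[
\frac{(1-Pt)^{2}\prod_{m=1}^{r-1}(1-Pt\,q^{m})^{2}}{(Pt)^{2r}q^{r(r-1)}}\;=\;\prod_{m=0}^{r-1}\bigl((Pt\,q^{m})^{-1}-1\bigr)^{2},
\]
which exhibits numerator and denominator on equal footing. Then I would perform partial fraction decomposition with respect to $q$, treating $P$ and $t$ as formal $K$-theoretic classes on $Y_{-1,-1}$. The decomposition splits into ``tame'' contributions with poles at roots of unity (which feed directly into $J^{Y_{-1,-1}}/(1-q)$) and ``shifted'' contributions with poles at $q = P^{-1/m}$ (to be absorbed by Birkhoff tangent vectors). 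After absorption the $q$-dependence collapses to $q^{r}$-dependence at each order $Q^{r}$, producing precisely the functions $a(r,q^{r})$ and $b(r,q^{r})$ of \eqref{e:4.1}. The surviving $K$-theoretic coefficients $(1-Pt)^{2}\bigl(1+(1-P)\bigr)$ and $(1-Pt)^{2}(1-P)$ encode, respectively, the $K$-theoretic Euler class of the normal bundle $\mathcal{O}(-1)\oplus\mathcal{O}(-1)$ of the zero section $P^{1}\hookrightarrow Y_{-1,-1}$ and the hyperplane class of the base $P^{1}$.

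The main obstacle is showing that the Birkhoff factorization produces exactly the $q^{r}$-dependence asserted in~\eqref{e:4.2} rather than generic rational dependence on $q$. This collapse reflects the $\mathbb{Z}/r$-symmetry of degree-$r$ multicovers of $P^{1}$, and is the conceptual heart of the multiple cover formula; it is not manifest from the $I$-function alone. A practical route is to verify both sides against a common $q$-difference recursion in $r$ (propagating $\prod_{m=1}^{r-1}(1-Pt\,q^{m})^{2}$ in the numerator), and then match the base case $r=1$ by direct computation: the $r=1$ term of the $I$-function is $(1-Pt)^{2}/\bigl((Pt)^{2}(1-Pq)^{2}\bigr)$, while the $r=1$ term of the right-hand side of~\eqref{e:4.2} is $(1-Pt)^{2}\bigl[(5-4P)/(1-q)^{2}-2(1-P)/(1-q)^{3}\bigr]$, and these must be shown to differ by a Birkhoff-equivalent tangent vector to $\mathcal{L}^{Y_{-1,-1}}$. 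Once the $r=1$ case is established, the recursion propagates the identity to all $r$, completing the proof.
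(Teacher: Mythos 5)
Your overall framework---apply Birkhoff factorization / the generalized mirror transform to the $I$-function of Lemma~\ref{l:4.5} to extract $J$---is the same as the paper's, but two of your central technical claims are off, and the idea that actually makes the computation tractable is missing. First, the pole structure: you propose to absorb ``shifted'' poles at $q = P^{-1/m}$ coming from $\prod_{m=1}^r(1-Pq^m)^2$ via Birkhoff tangent vectors. But in $K(Y_{-1,-1}) = \mathbb{Z}[P,t]/\bigl((1-P)^2,\ (1-Pt)^2(1-t)\bigr)$ the class $P$ is unipotent, $(1-P)^2=0$, so $\frac{1}{(1-Pq^m)^2}$ expands as a \emph{finite} sum in $(1-P)$ with poles only at roots of unity $q^m=1$; these terms already lie in $\mathcal{K}_-$ and are not what Birkhoff factorization removes. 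What distinguishes $I$ from $J$ here is the pole at $q=0$ coming from $q^{r(r-1)}$ and $(Pt)^{-2r}$, i.e.\ the Laurent-polynomial ($\mathcal{K}_+$) part, which the paper isolates explicitly. Second, and more importantly, you never use the ring relations. The paper's key step is that $(1-Pt)^2(1-t)=0$ forces $(1-Pt)^2(1-Ptq^m) = (1-Pt)^2(1-Pq^m)$, so the entire product $\prod_{m=1}^{r-1}(1-Ptq^m)^2$ in the numerator cancels against $\prod_{m=1}^{r-1}(1-Pq^m)^2$ in the denominator, reducing the $r$-th summand to $\frac{(1-Pt)^2}{(Pt)^{2r}q^{r(r-1)}(1-Pq^r)^2}$. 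The $q^r$-dependence of $a(r,q^r)$ and $b(r,q^r)$ then falls out directly from expanding $(1-Pq^r)^{-2}$ using $(1-P)^2=0$; there is no mysterious ``collapse'' to be explained by a $\mathbb{Z}/r$-symmetry of multicovers, and no $q$-difference recursion in $r$ is needed.

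Without the cancellation step your plan does not go through: the ``symmetrized'' product $\prod_{m=0}^{r-1}\bigl((Ptq^m)^{-1}-1\bigr)^2$ still carries the full telescoping numerator, and a partial-fraction decomposition ``treating $P$ and $t$ as formal classes'' (i.e.\ ignoring the relations) produces genuinely different rational functions of $q$ at each $r$, so the asserted matching with $a(r,q^r)$, $b(r,q^r)$ would simply fail. Your $r=1$ base case is consistent with the target formula, but the induction you sketch has no mechanism for propagating it. Finally, even after the $\mathcal{K}_+/\mathcal{K}_-$ split, one must verify via \cite[Theorem~2]{Givental_PEVIII_ER} that the mirror-map parameters $\e_i(Q)$, $\delta(Q)$, $s$, $r_1$ all vanish and that $r_2, r_3$ do not disturb the $\mathcal{K}_-$ part---the paper does this by induction on the Novikov degree; your proposal asserts the conclusion of this step (``$J$ is the unique representative with poles only at $q=1$'') without carrying it out in the presence of the nontrivial $K$-theoretic input space.
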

\begin{proof} 
The $K$-ring of $Y_{-1,-1}$ has the following presentation
\[
 K(Y_{-1,-1}) = \frac{\mathbb{Z} [P, t]}{\left( (1-P)^2, (1-Pt)^2(1-t) \right)}.
\]
We use the relations of $K(Y_{-1,-1})$ to rewrite $I^{Y_{-1,-1}}$ as follows. Since $(1-Pt)^2(1-t) =0$, we have
\begin{equation} \label{e:4.3}
\begin{split}
   &(1-Pt)^2 (1-Pq^m t) = (1-Pt)^2 (1-Pq^m [1-(1-t)]) \\
   = &(1-Pt)^2 (1-Pq^m).  
\end{split}
\end{equation}
Therefore,
\[
\begin{split}
    & I^{Y_{-1,-1}} - (1-q)\\
    = &(1-q) \sum_{r=1}^{\infty} Q^r \frac{(1-Pt)^2} 
    {(Pt)^{2r}q^{r(r-1)}(1-Pq^r)^2}
    \\
    = &(1-q)\sum_{r\geq 1}Q^r\left[ \Big( \sum_{i=1}^{r-1} \frac{i}{q^{r(r-i)}}\Big)(1-Pt)^2  + \Big( \sum_{i=1}^{r-1} \frac{i(2r-i+1)}{q^{r(r-i)}} \Big) (1-Pt)^3 \right]
    \\
    & + (1-q) \Big((1-Pt)^2 +(1-Pt)^3\Big)\sum_{r\geq 1} Q^r \Big( \frac{r-1}{1-q^r} + \frac{1}{(1-q^r)^2} \Big)
    \\
    & + (1-q)(1-Pt)^3 \sum_{r\geq 1}Q^r\Big( \frac{r^2-1}{1-q^r} + \frac{3}{(1-q^r)^2} -\frac{2}{(1-q^r)^3} \Big).
\end{split}
\]
In the first equality, the factor $\prod_{m=1}^{r-1} (1-Ptq^m)^2$ in the numerator and $\prod_{m=1}^{r-1}(1-Pq^m)^2$ in the denominator cancel each other due to the presence of $(1-Pt)^2$ and \eqref{e:4.3}.
The second equality follows from an explicit computation.

The first line after the second equal sign lies in $\mathcal{K}_+$ and the rest lies in $\mathcal{K}_-$. Consider the reconstruction theorem \cite[Theorem~2]{Givental_PEVIII_ER}
\[
\begin{split}
    &J^{X_{-1,-1}}(q, Q) =  \sum_{d\geq 0}  I_d^{X_{-1,-1}} Q^d \cdot 
    \\ 
    & \quad  \exp \left( \Big(\sum_{k>0}\frac{ \Psi^k}{k} \Big) \Big(\frac{ \delta(Q)(1-Pq^{d}) + \sum_{i=0}^3 \e_i(Q) (1-Pt q^{d})^i}{(1-q)}\Big) \right) \cdot
    \\
    &\qquad \left( s(q,Q)(1-Pq^d) + \sum_{i=0}^3 r_i(q,Q) (1-Ptq^d)^i \right),
\end{split}
\]
for some uniquely determined $\e_i(Q)$, $\delta(Q)$, $s(q,Q)$ and $r_i (q, Q)$, where
\[
\begin{split}
    \e_i(Q)& = \sum_{j \geq 1} \e_{ij}Q^j \in \mathbb{Q}[\![Q]\!], \\
    \delta(Q) &=\sum_{j \geq 1} \delta_{j}Q^j \in \mathbb{Q}[\![Q]\!],
    \\
    r_i(q,Q) 
   & = \sum_{j\geq 0} r_{ij}(q) Q^j \in \mathbb{Q}[q,q^{-1}][\![Q]\!],
   \\
   s(q,Q) 
   & = \sum_{j\geq 0} s_j(q) Q^j \in \mathbb{Q}[q,q^{-1}][\![Q]\!].
\end{split}
\]
A direct computation by induction on the degree of the Novikov variable shows that 
\[
r_0(q,Q) = 1, \quad \e_1(Q)=\e_2(Q)=\e_3(Q)= \delta(Q)=r_1(q,Q)=s(q,Q)=0.
\]
and that $r_2(q,Q)$ and $r_3(q,Q)$ will not change the $\mathcal{K}_-$ part. 
This concludes the proof.
\end{proof}

\begin{corollary}[Multiple cover formula \cite{Chou_Lee_2023}] \label{c:4.8}
 The small $J$-function for $X_{-1,-1}$ is
 \[
 \begin{split}
\frac{1}{1-q} & J^{X_{-1,-1}}(q) = 1 + \Big(1 +(1-P)\Big)\sum_{r \geq 1} \, Q^r a(r, q^r) 
\\
& + (1-P) \sum_{r\geq 1} Q^r \, b(r, q^r) , 
\end{split}
 \]
where $a(r, q^r), b(r, q^r)$ are defined in \eqref{e:4.1}.
\end{corollary}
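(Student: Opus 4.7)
The plan is to deduce the corollary from Proposition~\ref{p:4.6} by identifying and stripping off the $K$-theoretic Euler class of the normal bundle of the zero section $P^1 \hookrightarrow Y_{-1,-1}$, which is precisely the factor $(1-Pt)^2$ appearing throughout the formula for $J^{Y_{-1,-1}}$. The geometric rationale is that $X_{-1,-1}$ is the total space of this normal bundle $N = \O(-1) \oplus \O(-1)$, so $Y_{-1,-1}$ is obtained from $X_{-1,-1}$ by projectively compactifying the fibers via the infinity divisor $D_\infty$; the text already notes immediately after Lemma~\ref{l:4.5} that the $(1-Pt)^2$ factor in $I^{Y_{-1,-1}}$ has exactly this interpretation.

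First I would verify the moduli-theoretic compatibility. For every $r \geq 1$, any degree-$r\ell$ stable map to $Y_{-1,-1}$ must factor through the zero section, since $r\ell$ contains no fiber-class component. Hence the moduli spaces $\M_{0,1}(X_{-1,-1}, r\ell) = \M_{0,1}(Y_{-1,-1}, r\ell) = \M_{0,1}(P^1, r)$ coincide, and their perfect obstruction theories both reduce to that controlled by the restriction $T Y_{-1,-1}|_{P^1} = TP^1 \oplus N$; on the $X_{-1,-1}$ side this is exactly the bundle defining the local virtual structure sheaf. Consequently, any descendant $K$-invariant on $Y_{-1,-1}$ whose insertions are pulled back from the zero section equals the corresponding local invariant on $X_{-1,-1}$.

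Next I would translate this invariant-level equality into a comparison of $J$-functions. For the regular embedding $i: P^1 \hookrightarrow Y_{-1,-1}$, the self-intersection formula in $K$-theory gives $i_* i^* \Phi = \lambda_{-1}(N^*)\cdot \Phi = (1-Pt)^2 \Phi$, so that the diagonal class $\sum_\alpha \Phi_\alpha \otimes \Phi^\alpha$ for $Y_{-1,-1}$, restricted to the zero section, is $(1-Pt)^2$ times the corresponding diagonal for $P^1$ viewed as $X_{-1,-1}$. Since the small $J$-function is the generating series assembled from the diagonal, the contribution of zero-section curves to $J^{Y_{-1,-1}}$ is $(1-Pt)^2$ times the analogous series for $X_{-1,-1}$. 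Dividing the formula in Proposition~\ref{p:4.6} by $(1-Pt)^2$ produces the stated expression for $J^{X_{-1,-1}}(q)$, and the functions $a(r,q^r)$ and $b(r,q^r)$ are inherited verbatim.

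The main technical obstacle is giving meaning to the $K$-theoretic Poincar\'e pairing on the non-compact $X_{-1,-1}$, which is needed to define the dual classes $\Phi^\alpha$ entering $J^{X_{-1,-1}}$. The cleanest resolution is to work $\mathbb{C}^*$-equivariantly with respect to the standard scaling action on the fibers of $N$, where the equivariant $J$-functions of both $X_{-1,-1}$ and $Y_{-1,-1}$ are well-defined and related by virtual localization, and then pass to the non-equivariant limit, which exists thanks to the Calabi--Yau condition $c_1(N) = K_{P^1}$. Alternatively, one may take the resulting formula as the definition of $J^{X_{-1,-1}}$ in the local theory. Either route yields the stated multiple cover formula.
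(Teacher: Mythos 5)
Your proposal is correct and follows essentially the same route as the paper's own (very brief) proof: the degree-$r\ell$ invariants of $X_{-1,-1}$ and $Y_{-1,-1}$ coincide because such maps factor through the zero section with the same normal bundle, and the only discrepancy between the two $J$-functions is the change of $K$-theoretic Poincar\'e pairing, which amounts to stripping the factor $(1-Pt)^2$ from the positive-degree terms of \eqref{e:4.2}. Your additions --- the excess-intersection identity $i_*i^*\Phi=\lambda_{-1}(N^*)\Phi$ explaining \emph{why} the factor is exactly $(1-Pt)^2$, and the equivariant regularization of the pairing on the non-compact $X_{-1,-1}$ --- are elaborations of steps the paper leaves implicit rather than a different argument.
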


\begin{proof}
Since the zero section $P^1$ has normal bundle $\O(-1) \oplus \O(-1)$, the quantum $K$-invariants of $r \ell$ in $X_{-1,-1}$ are exactly the same as those in $Y_{-1,-1}$. The only difference in $J$-functions comes from different bases of the $K$-groups and the Poincar\'e pairing. The net result is the removal of the factor $(1-Pt)^2$ from the specialized $J^{Y_{-1,-1}}(q, Q)$ for non-zero degree terms.
\end{proof}


\subsection{Virtual Clemens' conjecture}

We now give a heuristic derivation of Conjecture~\ref{conjectureJK} and a heuristic interpretation of the relationship between Gopakumar--Vafa invariants and quantum $K$-invariants at genus zero by a multiple cover formula. This has served to guide us in our search for the current formulation of Conjecture~\ref{conjectureJK}, even though the actually proof follows a completely different approach. Of course, the original formulations of Jockers--Mayr \cite{Jockers_Mayr_2019} and Garoufalidis--Scheidegger \cite{Garoufalidis_Scheidegger_2022} have been enormous help.

Assume that we are given an ``ideal'' Calabi--Yau threefold $X$ satisfying a ``virtual Clemens' conjecture''. That is, there are, up to deformations, finitely many isolated rational curves $\{C_i \}$. Furthermore, they are all smooth $(-1,-1)$ curves. 

By Lemma~\ref{l:4.4}, each isolated $(-1,-1)$-curve (in any degree $\vec{d}$) contributes $1$ to the Gopakumar--Vafa invariants, independently of $\vec{d}$. Therefore, there are $\GV_{0,\vec{d}}$ isolated $(-1,-1)$-curves in degree $\vec{d}$. For each of these isolated curves, quantum $K$-theory allows multiple $r$-covers of the isolated $(-1,-1)$-curve. The coefficients $a(r, q^r)$ and $b(r,q^r)$ of the $r$-covers come from the $J$-function of $X_{-1,-1}$. The only addition is the factor of $\int_{\vec{d}} D_j$, where the divisor $D_j = \op{ch} (\Phi^{1j})$ comes from the divisor axiom. 

In summary, the ``virtual Clemens conjecture'' implies that GV $=$ QK for all Calabi--Yau threefolds in genus zero via the multiple cover contributions.

This line of thoughts lead us to believe that, in order to generalize this to higher genera, the most important ingredient is the higher genus multiple cover formula in quantum $K$-theory. It is entirely possible that the higher genus multiple cover formulas will serve as universal coefficients, similar to the genus zero case. We intend to pursue this in future works.




\bibliographystyle{plain}
    
\bibliography{zbib}
    
\end{document}